\theoremstyle{plain}
\theoremstyle{plain}
\newtheorem{theorem}{Theorem}[section]
\theoremstyle{plain}
\newtheorem{proposition}[theorem]{Proposition}
\theoremstyle{plain}
\newtheorem{lemma}[theorem]{Lemma}
\theoremstyle{plain}
\theoremstyle{plain}
\theoremstyle{plain}
\theoremstyle{plain}
\theoremstyle{definition}
\newtheorem{definition}[theorem]{Definition}
\theoremstyle{remark}
\newtheorem{remark}[theorem]{Remark}
\theoremstyle{remark}
\theoremstyle{remark}
\title{An alternative proof of Kazhdan property for elementary groups}
\author{Masato Mimura}
\address{Mathematical Institute, Tohoku University / EPF Lausanne}
\begin{document}

\begin{abstract}
In 2010, Invent.\ Math., Ershov and Jaikin-Zapirain proved Kazhdan's property $(\mathrm{T})$ for elementary groups. This expository article focuses on presenting an alternative simpler  proof of that. Unlike the original one, our proof supplies \textit{no} estimate of Kazhdan constants. It may be regarded as a specific example of the results in the paper \textit{``Upgrading fixed points without bounded generation''} (arXiv:1505.06728, forthcoming version) by the author.
\end{abstract}

\keywords{Kazhdan's property $(\mathrm{T})$, elementary groups}

\maketitle
\thispagestyle{empty}

\section{Introduction}
Throughout this article, $\mathcal{H}$ stands for an arbitrary Hilbert space (we do not fix a single Hilbert space: it can be non-separable after taking metric ultraproducts). 
\begin{definition}
For a countable (discrete) group $G$ and $G\geqslant M$, we say that $G\geqslant M$ has \textit{relative property $(\mathrm{FH})$} if for all (affine) isometric $G$-actions $\alpha\colon G\curvearrowright \mathcal{H}$ (for every $\mathcal{H}$), the $M$-fixed point set $\mathcal{H}^{\alpha(M)}$ is non-empty. We say that $G$ has \textit{property $(\mathrm{FH})$} if $G\geqslant G$ has relative property $(\mathrm{FH})$. 
\end{definition}

The Delorme--Guichardet Theorem \cite[Theorem~2.12.4]{BHV} states that (relative) property $(\mathrm{FH})$ is equivalent to (relative) \textit{property $(\mathrm{T})$ of Kazhdan}. Therefore, throughout this article, we use the terminology ``property $(\mathrm{T})$'' for property $(\mathrm{FH})$. See \cite{BHV} for details on these properties. A fundamental example of groups with property $(\mathrm{T})$ is $\mathrm{SL}(n,\mathbb{Z})$ for $n\geq3$ (see \cite[Example~1.7.4.(i)]{BHV}). 

The goal of this article is to provide an alternative proof of the following theorem.

\begin{theorem}[Ershov and Jaikin-Zapirain, Theorem~$1$ in \cite{EJ}]\label{theorem=EJ}
For a finitely generated and  associative ring $R$ with unit and for $n\geq 3$, the elementary group $\mathrm{E}(n,R)$ has property $(\mathrm{T})$.
\end{theorem}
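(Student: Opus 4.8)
The plan is to fix an arbitrary affine isometric action $\alpha\colon G\curvearrowright\mathcal H$ with $G=\mathrm E(n,R)$, $n\geq 3$, and to exhibit a point of $\mathcal H^{\alpha(G)}$; by the Delorme--Guichardet theorem this yields property $(\mathrm T)$. The only analytic ingredient will be a relative property $(\mathrm{FH})$ for a family of semidirect‑product subgroups of $G$; everything afterwards is an essentially algebraic ``upgrading'' that converts partial fixed‑point information into a global fixed point, and this is where $n\geq 3$ enters, through the Steinberg commutator relations. As a harmless simplification one may, if desired, take $R$ to be a free associative ring $\mathbb Z\langle x_1,\dots,x_d\rangle$: the assignment $A\mapsto\mathrm E(n,A)$ is functorial in the ring, every finitely generated associative unital ring is such a quotient, and property $(\mathrm{FH})$ passes to quotient groups. (This reduction is not essential.)

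\emph{The relative input.} I would first record the ``seed'': for every finitely generated associative unital ring $A$ the pair $\bigl(\mathrm E(2,A)\ltimes A^{2},\,A^{2}\bigr)$, with $A^{2}$ the column module under left multiplication, has relative property $(\mathrm{FH})$. (This is classical for commutative $A$, and for associative $A$ it is available from \cite{EJ} and the references therein; its proof is the analytic/number‑theoretic heart of the subject.) Restricting along the obvious embeddings, $\bigl(\mathrm E(m,A)\ltimes A^{m},\,A^{m}\bigr)$ has relative property $(\mathrm{FH})$ for all $m\geq 2$. Inside $G$, for each index $i$ let $C_i=\langle e_{ki}(r):k\neq i,\ r\in R\rangle\cong R^{\,n-1}$ be the $i$‑th column root subgroup and $L_i=\langle e_{ik}(r):k\neq i,\ r\in R\rangle\cong R^{\,n-1}$ the $i$‑th row one. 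The subgroup generated by $C_i$ together with the copy of $\mathrm E(n-1,R)$ in the block complementary to $i$ is a semidirect product $\mathrm E(n-1,R)\ltimes C_i$ with $C_i$ normal; since $n-1\geq 2$, the seed applies and yields $\mathcal H^{\alpha(C_i)}\neq\emptyset$, and symmetrically $\mathcal H^{\alpha(L_i)}\neq\emptyset$, for every $i$.

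\emph{The upgrading.} It remains to pass from nonemptiness of the closed affine subspaces $\mathcal H^{\alpha(C_i)}$, $\mathcal H^{\alpha(L_i)}$ to $\mathcal H^{\alpha(G)}\neq\emptyset$. Since $[e_{i1}(r),e_{1j}(s)]=e_{ij}(rs)$ for $i,j\geq 2$, one already has $G=\langle C_1,L_1\rangle$, so the crux is: two subgroups generating $G$, each with a nonempty fixed‑point set, must have a common fixed point. This is simply \emph{false} for general pairs of subgroups — three lines in a plane can pairwise meet without being concurrent, so no soft convexity or Helly‑type reasoning can succeed — and the classical route to property $(\mathrm T)$ for $\mathrm{SL}_n(\mathbb Z)$ or $\mathrm{SL}_n(\mathbb Z[x])$ closed the gap by a bounded product decomposition $G=H_1\cdots H_k$, which is unavailable, and for generic noncommutative $R$ not expected, for $\mathrm E(n,R)$. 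The substitute is to use the full grid of Steinberg relations $[e_{ik}(r),e_{kj}(s)]=e_{ij}(rs)$ — available because one can always pick a third index $k$ when $n\geq 3$ — which rigidly interlocks the fixed‑point sets of the various $C_i$ and $L_i$; converting this interlocking into an honest common fixed point is exactly the mechanism of ``Upgrading fixed points without bounded generation'' (arXiv:1505.06728). Concretely I would verify that $(C_1,L_1)$ — or, equivalently, the pair of maximal parabolics $P,Q$ with $G=\langle P,Q\rangle$, whose fixed‑point sets are likewise nonempty (for $P\cong\mathrm E(n-1,R)\ltimes R^{\,n-1}$ and $n\geq 4$ the Levi factor $\mathrm E(n-1,R)$ has property $(\mathrm{FH})$ by induction, and it fixes a point in the fixed‑point subspace of the normal $R^{\,n-1}$, whence $\mathcal H^{\alpha(P)}\neq\emptyset$) — forms a ``nondegenerate'' configuration in the sense required there, and apply that result to conclude.

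The step I expect to be the main obstacle is precisely this last one: isolating and proving the abstract upgrading statement that does the job of bounded generation while consuming only the commutator relations, with no metric estimate. This is also why the proof, unlike that of Ershov and Jaikin-Zapirain, produces no bound on Kazhdan constants. The remaining pieces — the reductions, the identification of the subgroups $C_i,L_i$ and of the relevant parabolics, and (if one chooses to induct on $n$ rather than treat all $n\geq 3$ uniformly) the bookkeeping of the inductive step — are routine once the upgrading theorem is in hand.
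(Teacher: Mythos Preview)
Your overall strategy coincides with the paper's: Kassabov's relative property $(\mathrm{T})$ for the column group $M\cong R^{n-1}$ and the row group $L\cong R^{n-1}$, fed into the ``upgrading without bounded generation'' machine. Two genuine gaps remain, however.

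First, the framing ``fix an arbitrary $\alpha$ and exhibit a point of $\mathcal H^{\alpha(G)}$'' is not how the upgrading mechanism operates, and cannot be made to work as stated. For a single action the distance $D=\mathrm{dist}(\mathcal H^{\alpha(M)},\mathcal H^{\alpha(L)})$ need not be \emph{realized}, and the entire game argument (Shalom's parallelogram uniqueness) hinges on having an actual realizer $(\xi,\eta)$. The paper proceeds by contradiction: if $G$ lacks property $(\mathrm T)$, a Gromov--Schoen rescaling produces a $1$-uniform action, and a metric ultraproduct along a minimizing sequence taken over the whole class of $1$-uniform actions forces a realizer to exist; only then is the game played. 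So the upgrading theorem is a statement about property $(\mathrm T)$ for $G$, not a recipe for manufacturing fixed points action by action, and you should invoke it as such.

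Second, you never verify the hypothesis of the black box you cite. Saying you ``would verify that $(C_1,L_1)$ forms a nondegenerate configuration'' is precisely the content that has to be supplied; in the paper's language this means winning the $(\mathrm{Game}^{\mathrm{inn}})$ for $(G,M,L)$, and it is done in two explicit moves. A type $(\mathrm I)$ move with $Q$ the block Levi $\mathrm E(n-1,R)$ --- which normalizes both $M$ and $L$ --- enlarges $H_1,H_2$ to the two opposite maximal parabolics; then a type $(\mathrm{II}^{\mathrm{inn}})$ move with a single signed-permutation element $w\in G$ swapping the two extremal indices conjugates each parabolic onto the other, so that $wH_2^{(1)}w^{-1}\geqslant M$ and $wH_1^{(1)}w^{-1}\geqslant L$. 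After this second move one Steinberg relation $(\#)$ yields $H_1^{(2)}=H_2^{(2)}=G$. Your inductive alternative via parabolics, incidentally, has no base case: for $n=3$ the Levi is $\mathrm E(2,R)$, which in general does not have property $(\mathrm T)$.
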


Here, for such $R$ and $n$, the \textit{elementary group} $\mathrm{E}(n,R)$ is the subgroup of $\mathrm{GL}(n,R)$ generated by \textit{elementary matrices} $\{e_{i,j}^r:i\ne j \in \{1,2,\ldots, n\},r\in R\}$. The $e_{i,j}^r$ is defined by $(e_{i,j}^r)_{k,l}:=\delta_{k,l}+r\delta_{i,k}\delta_{j,l}$, where $\delta_{\cdot,\cdot}$ is the Kronecker delta. This theorem greatly generalizes the aforementioned example because for $R=\mathbb{Z}$, Gaussian elimination implies that $\mathrm{SL}(n,\mathbb{Z})=\mathrm{E}(n,\mathbb{Z})$ for all $n\geq 2$. The commutator relation
\[
[e_{i,j}^{r_1},e_{j,k}^{r_2}]=e_{i,k}^{r_1r_2} \quad \textrm{for }i\ne j\ne k\ne i\textrm{ and for $r_1,r_2\in R$}\tag{$\#$}
\]
implies finite generation of $\mathrm{E}(n,R)$ as in Theorem~\ref{theorem=EJ}. Here, $[\gamma_1,\gamma_2]:=\gamma_1\gamma_2\gamma_1^{-1}\gamma_2^{-1}$.

\noindent
\textit{Note.} In \cite{EJ}, $\mathrm{E}(n,R)$ is written as $EL_n(R)$. The ring $R$ may be non-commutative.

For motivations of this result, see Introduction of \cite{EJ}.

\section{Strategies: common points and difference}
Both of the original proof and the new proof in this article consist of following two steps. For $G=\mathrm{E}(n,R)$,
\begin{itemize}
  \item (``Building block'') Show relative properties $(\mathrm{T})$ for $G\geqslant M_j$ for certain subgroups $M_1$,\ldots ,$M_l$.
  \item (``Upgrading'') Upgrade them to property $(\mathrm{T})$ for $G$.
\end{itemize}
In the first step, both of us employ the following.
\begin{theorem}[Kassabov,  Corollary~$2.8$ in \cite{kassabov}]\label{theorem=kassabov}For $R$ and $n$ as in Theorem~$\ref{theorem=EJ}$, and for all $i\ne j \in \{1,\ldots ,n\}$, $\mathrm{E}(n,R)\geqslant G_{i,j}$ has relative property $(\mathrm{T})$. Here, $G_{i,j}:=\langle e_{i,j}^r:r\in R\rangle (\simeq (R,+) )$.
\end{theorem}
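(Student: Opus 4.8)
The plan is, first, to reduce to the case $n=3$ and to a semidirect-product pair, and then to deduce that pair's relative property $(\mathrm{T})$ from the classical one of $(\mathrm{SL}(2,\mathbb{Z})\ltimes\mathbb{Z}^2,\mathbb{Z}^2)$ by harmonic analysis on the Pontryagin dual of $(R,+)$.

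For the reductions, note that relative property $(\mathrm{FH})$ for a pair $M\leqslant G$ is inherited by every pair $M\leqslant G'$ with $G\leqslant G'$ (restrict an affine isometric $G'$-action to $G$) and by quotient pairs (an affine isometric $G'$-action for $\pi\colon G\twoheadrightarrow G'$ pulls back to $G$, and an $M$-fixed point is $\pi(M)$-fixed). Hence it is enough to treat $n=3$, the single pair $G_{1,3}\leqslant\mathrm{E}(3,R)$ (given $i\ne j$, relabel a triple $\{i,l,j\}$ containing $i,j$, with $l$ a third index, as $\{1,2,3\}$), and $R$ the free associative unital ring $\mathbb{Z}\langle x_1,\dots,x_k\rangle$. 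Now the commutator relations $(\#)$ show that $H:=\langle e_{1,2}^r,\,e_{2,1}^r:r\in R\rangle$, a copy of $\mathrm{E}(2,R)$ inside the upper-left $2\times 2$ block, normalizes the abelian subgroup $A:=\langle e_{1,3}^r,\,e_{2,3}^r:r\in R\rangle\cong(R,+)^2$, with the conjugation action of $H$ on $A$ being the standard action of $\mathrm{E}(2,R)$ on column vectors. Thus $H\ltimes A\cong\mathrm{E}(2,R)\ltimes R^2$ is a subgroup of $\mathrm{E}(3,R)$ containing $G_{1,3}$ inside its abelian normal part $A$; restricting affine isometric $\mathrm{E}(3,R)$-actions to $H\ltimes A$, it suffices to prove that $(\mathrm{E}(2,R)\ltimes R^2,\,R^2)$ has relative property $(\mathrm{FH})$.

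For this I would pass to the dual. Let $\widehat R$ be the compact dual of $(R,+)$. By the Schoenberg correspondence and Bochner's theorem applied inside the semidirect product, $(\mathrm{E}(2,R)\ltimes R^2,R^2)$ has relative property $(\mathrm{FH})$ exactly when there is no sequence of Borel probability measures on $\widehat R\times\widehat R$ that stays uniformly off the trivial character yet is asymptotically $\mathrm{E}(2,R)$-invariant in total variation, where a transvection by $r\in R$ acts by $(\xi,\eta)\mapsto(\xi,\,\eta-\xi\circ L_r)$ or $(\xi,\eta)\mapsto(\xi-\eta\circ L_r,\,\eta)$ with $L_r$ left multiplication by $r$; for $r=\pm 1$ these are exactly the generators of the classical $\mathrm{SL}(2,\mathbb{Z})$-action on the $\mathbb{Z}$-module $\widehat R\oplus\widehat R$. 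The heart of the matter is the harmonic-analysis assertion that any such almost-invariant sequence of measures must concentrate at the trivial character. I would prove it by filtering $R$ by the subrings $R_i=\langle x_1,\dots,x_i\rangle$ and inducting on $i$, with base case $i=0$ the classical relative property $(\mathrm{T})$ of $(\mathrm{SL}(2,\mathbb{Z})\ltimes\mathbb{Z}^2,\mathbb{Z}^2)$ (Kazhdan; \cite{BHV}), applied after restricting characters to $\mathbb{Z}\cdot 1$ and pushing the measures forward: this already forces concentration ``in the constant-term direction''. One then propagates from $\widehat{R_{i-1}}$ to $\widehat{R_i}$ using the transvections by $x_i$ together with the multiplication (shift) operators, exploiting that $R_i$ is generated over $R_{i-1}$ by the single element $x_i$, so that a character trivial on $R_{i-1}\cdot 1$ and stable under $L_{x_i}$ is trivial on all of $R_i$.

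The step I expect to be the main obstacle is this last propagation. A direct coordinate-by-coordinate induction stalls because multiplication by $x_i$ raises degree; the remedy is to run the shift structure of $R_i$ over $R_{i-1}$ simultaneously with the $\mathrm{SL}(2,\mathbb{Z})$-action on all Fourier coordinates at once (a Mautner-type argument), and, in the non-commutative case, to symmetrize by invoking both the $e_{1,2}$- and the $e_{2,1}$-transvections so as to control left and right multiplication at the same time. Since the present article needs only the qualitative statement, the inevitable degradation of Kazhdan constants along the filtration does no harm; but the bound-free argument still rests on a genuine harmonic-analysis estimate on $\widehat R$, and carrying that out — uniformly enough, and over an arbitrary finitely generated associative ring — is the real work.
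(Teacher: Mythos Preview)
Your reductions are correct and match exactly what the paper does. The paper itself does not prove this theorem: it quotes Kassabov's result for the semidirect-product pairs $\mathrm{E}(n-1,R)\ltimes R^{n-1}\trianglerighteq R^{n-1}$ (for $n=3$ this is your $(\mathrm{E}(2,R)\ltimes R^2,R^2)$), remarks that the proof is ``by spectral theory associated with unitary representations of abelian groups'' with references to Shalom and to \cite[Sections~4.2--4.3]{BHV}, and then gives the same hereditary observation you use, namely that relative property $(\mathrm{T})$ for $G_0\geqslant M_0$ implies it for any $G\geqslant G_0\geqslant M_0\geqslant M$. So on the part where the paper actually argues, you agree.

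Where you go beyond the paper is in sketching the spectral argument itself. Your outline---pass to $\widehat R\times\widehat R$, reduce to the free ring, induct on generators with base case the classical pair $(\mathrm{SL}(2,\mathbb{Z})\ltimes\mathbb{Z}^2,\mathbb{Z}^2)$, and propagate via transvections by the new variable---is in the spirit of Shalom's and Kassabov's actual proofs. But, as you yourself flag, the propagation step is the entire content, and your paragraph does not carry it out: the claim that a character ``trivial on $R_{i-1}\cdot 1$ and stable under $L_{x_i}$ is trivial on all of $R_i$'' is not what one needs (one needs a quantitative statement about \emph{measures}, not a pointwise statement about characters, and the non-commutativity forces one to handle both left and right shifts simultaneously, as you note). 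So your proposal is a correct reduction together with an honest but incomplete pointer to the hard part; since the paper simply cites Kassabov for that hard part, there is no discrepancy to report---you have just been more explicit about what is being black-boxed.
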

\noindent
\textit{Note.} Kassabov showed it in terms of the original definition of property $(\mathrm{T})$. As we mentioned above, this is equivalent to our property $(\mathrm{T})$ (property $(\mathrm{FH})$).

In fact, Kassabov's original form is for the pair $\mathrm{E}(n-1,R)\ltimes R^{n-1} \trianglerighteq R^{n-1}$ and ${}^t(R^{n-1})\rtimes \mathrm{E}(n-1,R) \trianglerighteq {}^t(R^{n-1})$ for all $n\geq 3$. Here for the former, $\mathrm{E}(n-1,R)$ acts on $R^{n-1}$ (column vectors) by the left multiplication; for the latter, it acts on ${}^t(R^{n-1})$ (row vectors) by the right multiplication. The proof for these pairs is  by spectral theory associated with  unitary representations of abelian groups; see also \cite{shalom1999} and \cite[Sections~4.2 and 4.3]{BHV}. To deduce Theorem~\ref{theorem=kassabov} from this, embed $\mathrm{E}(n-1,R)\ltimes R^{n-1} $ and ${}^t(R^{n-1})\rtimes \mathrm{E}(n-1,R)$ into $\mathrm{E}(n,R)$ in several ways. Note that if $G\geqslant G_0\geqslant M_0\geqslant M$, then relative property $(\mathrm{T})$ for $G\geqslant M$ follows from that for $G_0\geqslant M_0$.

The difference between the original proof in \cite{EJ} and ours lies in ``Upgrading''.
\begin{enumerate}[$(1)$]
  \item In the original proof, upgrading is \textit{extrinsic}: They consider \textit{angles} between fixed point subspaces, and showed that if angles are sufficiently close to being orthogonal, then upgrading works. 

More precise form is as follows. Since their original argument deals with the original formulation of property $(\mathrm{T})$, we here rather sketch the argument in formulation of Lavy \cite{lavy}. For two (non-empty) affine subspaces $\mathcal{K}_1$, $\mathcal{K}_2$ of $\mathcal{H}$, they define 
\[
\cos \angle (\mathcal{K}_1,\mathcal{K}_2):=\sup_{0\ne\xi \in \mathcal{K}_1'/(\mathcal{K}_1'\cap\mathcal{K}_2'),\ 0\ne \eta \in \mathcal{K}_2'/(\mathcal{K}_1'\cap\mathcal{K}_2')} \frac{|\langle \xi,\eta \rangle |}{\|\xi\|\|\eta\|}\in [0,1],
\]
where $\mathcal{K}_i'$, $i=1,2$, is the \textit{linear} subspace obtained by parallel transformation of $\mathcal{K}_i$. The symbol $\langle \cdot,\cdot \rangle$ means the (induced) inner product on $\mathcal{H}/(\mathcal{K}_1'\cap\mathcal{K}_2')$. They say $\mathcal{K}_1$ and $\mathcal{K}_2$ are \textit{$\epsilon$-orthogonal} if $\cos \angle (\mathcal{K}_1,\mathcal{K}_2)<\epsilon$. 

They showed existence of  (explicit) $(\epsilon_{i,j})_{1\leq i<j\leq l}$ (for every $l\geq 3$) with the following property. For $G\geqslant H_i$, $1\leq i\leq l$, with $\langle H_1,\ldots ,H_l\rangle =G$, if $\mathcal{H}^{\alpha(H_i)}$ and  $\mathcal{H}^{\alpha(H_j)}$ are  $\epsilon_{i,j}$-orthogonal for all $i\ne j$ and for all affine isometric actions $\alpha\colon G\curvearrowright \mathcal{H}$, then ``relative properties $(\mathrm{T})$ for $G\geqslant H_{i,j}:=\langle H_i,H_j\rangle$ for all $i\ne j$'' imply property $(\mathrm{T})$ for $G$. See \cite[Subsection~1.2 and Section~2]{lavy} as well as \cite{EJ}.

\noindent
\textit{Note.} To apply this ``(extrinsic) upgrading'' criterion for $G=\mathrm{E}(n,R)$ for a general $R$ (say, $R=\mathbb{Z}\langle x,y\rangle$, a non-commutative polynomial ring), quite delicate estimate of spectral quantities ($\epsilon_{i,j}$ for $\epsilon_{i,j}$-orthogonality) is needed. This, together with the proof of the criterion, makes their proof heavier. In return, they obtain estimation of \textit{Kazhdan constants} (see \cite[Remark~1.1.4]{BHV}).
  \item Our upgrading is \textit{intrinsic}: Our criterion is stated only in terms of group structure, and not of group actions. This makes our proof rather simpler than the original one. The price to pay, however, is that we do \textit{not} obtain any estimate for Kazhdan constants. This is a special case of our \textit{$($intrinsic$)$ upgrading without bounded generation} \cite{mimurass}. Our upgrading result is as follows.
\end{enumerate}

\begin{theorem}[\textit{Upgrading} theorem in this article]\label{theorem=synthesis}
Let $G$ be a finitely generated group and $M,L\leqslant G$ satisfy $\langle M,L\rangle=G$. Assume the following hypothesis.

\noindent
\underline{$(\mathrm{GAME}^{\mathrm{inn}})$ hypothesis}: The player can win the $(\mathrm{Game}^{\mathrm{inn}})$ for $(G,M,L)$, which is defined in Section~$\ref{section=game}$.

Then, relative properties $(\mathrm{T})$ for $G\geqslant M$ and $G\geqslant L$ imply property $(\mathrm{T})$ for $G$.
\end{theorem}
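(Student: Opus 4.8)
The plan is to establish property $(\mathrm{FH})$ for $G$, which by the Delorme--Guichardet theorem is equivalent to property $(\mathrm{T})$. Fix an arbitrary affine isometric action $\alpha\colon G\curvearrowright\mathcal H$; the goal is to produce a point of $\mathcal H^{\alpha(G)}$. For a subgroup $Q\leqslant G$ write $C_Q:=\mathcal H^{\alpha(Q)}$: this is a closed convex subset of $\mathcal H$, being an intersection of fixed-point sets of individual affine isometries, and relative property $(\mathrm{T})$ for $G\geqslant Q$ says exactly that $C_Q\neq\emptyset$. Since $\langle M,L\rangle=G$, a point is $\alpha(G)$-fixed if and only if it lies in $C_M\cap C_L$, so the theorem reduces to showing $C_M\cap C_L\neq\emptyset$. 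Moreover $C_{gMg^{-1}}=\alpha(g)C_M$ for every $g\in G$, so the hypothesis also provides $C_{gMg^{-1}}\neq\emptyset$ for all $g$, and likewise for conjugates of $L$; these conjugates form the pool of nonempty closed convex sets the game is allowed to manipulate.

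The substance of the argument is to convert a winning strategy in $(\mathrm{Game}^{\mathrm{inn}})$ for $(G,M,L)$ into an actual point of $C_M\cap C_L$. I would do this by setting up a dictionary between the game and the geometry of $\mathcal H$: a position of the game corresponds to a finite amount of data made of nonempty closed convex subsets of $\mathcal H$ of the form $\alpha(g)C_M$ and $\alpha(g)C_L$ together with a record of incidence relations among them already established (inclusions and nonempty intersections), and each move the player may make corresponds to a \emph{sound} operation on such data --- applying some $\alpha(g)$, taking nearest-point projections or barycenters (all $1$-Lipschitz and well defined on nonempty closed convex subsets of the complete space $\mathcal H$), or recording a new nonempty intersection inferred from the ones already present when the rules permit it. One then checks, move type by move type against the definition in Section~\ref{section=game}, that every admissible move preserves the invariant ``the attached convex sets are nonempty and realise the recorded incidences'', starting from the initial position attached to $(C_M,C_L)$. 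A winning strategy of the player then drives this, through a controlled (well-founded) sequence of steps, to the conclusion $C_M\cap C_L\neq\emptyset$. Two analytic ingredients enter: completeness of $\mathcal H$, to take limits of the Cauchy sequences of candidate points produced along the play; and, where the rules force passage to a limiting configuration, a metric ultraproduct $\mathcal H^\omega$ of $\mathcal H$ --- again a Hilbert space carrying an affine isometric $G$-action --- which is precisely why the introduction lets $\mathcal H$ range over all Hilbert spaces, including the non-separable ones produced this way.

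The main obstacle is the faithfulness of this dictionary, not the reduction above. One must verify that \emph{every} move the game grants the player is a genuinely valid inference about $\alpha$-fixed-point sets; the delicate case is the move that produces a new nonempty intersection, whose soundness rests on the convexity, completeness, and ultraproduct input just mentioned. It is here that the lack of any bounded-generation hypothesis must be absorbed: along a winning play this move may be invoked an a priori unbounded number of times, so one has to ensure that the successive approximations it produces converge rather than accumulate error --- which is what the Bruhat--Tits-style convexity arguments guarantee in a complete space. One should also confirm that the game is not too permissive, i.e.\ that no winning play can use a move lacking such a translation; this is a matter of reading off the rules in Section~\ref{section=game}. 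Granting the dictionary, the proof of Theorem~\ref{theorem=synthesis} is just a matter of playing out a winning strategy.
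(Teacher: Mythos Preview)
Your proposal has a genuine gap: it never supplies the mechanism that makes the game moves sound. You propose to track the convex sets $C_{H_1^{(\iota)}}$, $C_{H_2^{(\iota)}}$ through the play and to certify new nonempty intersections via ``projections, barycenters, and Bruhat--Tits-style convexity'', but you do not explain why enlarging $H_1^{(\iota-1)}$ to $H_1^{(\iota)}=\langle H_1^{(\iota-1)},Q\rangle$ (type $(\mathrm{I})$) or to $\langle H_1^{(\iota-1)},\bigcup_{w} w^{-1}H_2^{(\iota-1)}w\rangle$ (type $(\mathrm{I\hspace{-.1em}I}^{\mathrm{inn}})$) keeps the fixed-point set nonempty. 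Nearest-point projection onto $C_{H_1^{(\iota-1)}}$ has no reason to land in $C_Q$, and there is no iterative approximation with controllable error to appeal to; working with an \emph{arbitrary} action $\alpha$ as you do, the invariant you want simply need not hold.

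The paper's argument is structurally different. It proceeds by contradiction: if $G$ lacks property $(\mathrm{T})$, a Gromov--Schoen rescaling followed by a metric ultraproduct (taken over a sequence of $1$-uniform actions) produces a single $1$-uniform action in which the distance $D=\mathrm{dist}(\mathcal{H}^{\alpha(M)},\mathcal{H}^{\alpha(L)})$ is actually \emph{realized} by a pair $(\xi,\eta)$. After cutting out $\mathcal{H}^{\pi(G)}$ (using that $G^{\mathrm{abel}}$ is finite), Shalom's parallelogram argument shows this realizer is \emph{unique}. Uniqueness is the engine: for a type $(\mathrm{I})$ move, each $\gamma\in Q$ sends the realizer to a realizer and hence fixes both $\xi$ and $\eta$; for a type $(\mathrm{I\hspace{-.1em}I}^{\mathrm{inn}})$ move, each $w\in W$ swaps the roles, so $(\alpha(w)\eta,\alpha(w)\xi)$ is again the realizer, forcing $\alpha(w)\xi=\eta$. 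Thus the \emph{same} pair $(\xi,\eta)$ remains $H_1^{(\iota)}$- and $H_2^{(\iota)}$-fixed at every stage, with no limits or error accumulation along the play; a win makes $\xi$ (or $\eta$) $G$-fixed, contradicting $1$-uniformity. Ultraproducts thus enter once, \emph{before} the game begins, to manufacture the realizer --- not along the play as you suggest.
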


\section{$(\mathrm{Game}^{\mathrm{inn}})$}\label{section=game}

Let $G$ be a group, and $M,L\leqslant G$ satisfy $\langle M,L\rangle =G$. The ``\textit{$(\mathrm{Game}^{\mathrm{inn}})$ for $(G,M,L)$}'' is a one-player game. Here, we fix $(G,M,L)$ and keep them unchanged. For each ordinal $\iota$, we set $H_1^{(\iota)}\leqslant G$ and $H_2^{(\iota)}\leqslant G$. Those two subgroups $H_1$ and $H_2$, indexed by $\iota$, are respectively ``upgraded'' (enlarged). The rules are the following.

\begin{itemize}
  \item For the initial stage ($\iota=0$), $H_1^{(0)}=M$ and $H_2^{(0)}=L$.
  \item The player wins if there exists $\iota$ for which the player can set as either $H_1^{(\iota)}=G$ or $H_2^{(\iota)}=G$.
  \item For each non-zero limit ordinal $\iota$, the player can upgrade $H_1$ and $H_2$ as 
\renewcommand{\arraystretch}{1.5}
\begin{table}[h]
 \begin{tabular}{c|c}
   $H_1^{(\iota)}$  & $H_2^{(\iota)}$ \\ \hline 
$\bigcup_{\iota'<\iota} H_1^{(\iota')}$ & $\bigcup_{\iota'<\iota} H_2^{(\iota')}$.
  \end{tabular}
\end{table}
\renewcommand{\arraystretch}{1.0}

Here, this table means that for each $j=1,2$, $H_j^{(\iota)}$ is defined as $\bigcup_{\iota'<\iota} H_j^{(\iota')}$. We use similar tables to indicate this kind of upgrading.
  \item For each non-zero $\iota$ such that $\iota-1$ exists, the player is allowed to take either one of admissible moves for upgrading: type $(\mathrm{I})$ move and type $(\mathrm{I\hspace{-.1em}I}^{\mathrm{inn}})$ move.

\end{itemize}

\noindent
(Rules of the admissible moves: upgrading from $\iota-1$ to $\iota$) 
\begin{itemize}
  \item \textit{Type $(\mathrm{I})$ move}. Pick a subset $(Q^{(\iota)}=)Q\subseteq G$ such that for all $\gamma \in Q$,
\[
\gamma H_1^{(\iota -1)}\gamma^{-1}\geqslant M \quad \mathrm{and}\quad \gamma H_2^{(\iota-1)}\gamma^{-1}\geqslant L.
\]
Then, upgrade $H_1$ and $H_2$ as
\renewcommand{\arraystretch}{1.5}
\begin{table}[h]
 \begin{tabular}{c|c}
   $H_1^{(\iota)}$  & $H_2^{(\iota)}$ \\ \hline 
$\langle H_1^{(\iota-1)},Q\rangle$ & $\langle H_2^{(\iota-1)},Q\rangle$.
  \end{tabular}
\end{table}
\renewcommand{\arraystretch}{1.0}
  \item \textit{Type $(\mathrm{I\hspace{-.1em}I}^{\mathrm{inn}})$ move}. Pick a subset $(W^{(\iota)}=)W \subseteq G$ such that for all $w \in W$,
\[
wH_2^{(\iota-1)}w^{-1}\geqslant M\quad \mathrm{and}\quad wH_1^{(\iota-1)}w^{-1} \geqslant L.
\]
Then, upgrade $H_1$ and $H_2$  as
\renewcommand{\arraystretch}{1.5}
\begin{table}[h]
 \begin{tabular}{c|c}
   $H_1^{(\iota)}$  & $H_2^{(\iota)} $\\ \hline 
$\langle H_1^{(\iota-1)},\bigcup_{w \in W}w^{-1} H_2 ^{(\iota-1)}w\rangle$& $\langle H_2^{(\iota-1)},\bigcup_{w \in W}w^{-1}H_1^{(\iota-1)}w\rangle$.
  \end{tabular}
\end{table}
\renewcommand{\arraystretch}{1.0}
\end{itemize}
Here, $(\mathrm{I\hspace{-.1em}I}^{\mathrm{inn}})$ means the following. In more general ``$(\mathrm{Game})$", there is a notion of type $(\mathrm{I\hspace{-.1em}I}_{(12)})$ moves. Type $(\mathrm{I\hspace{-.1em}I}^{\mathrm{inn}})$ moves are  restricted type $(\mathrm{I\hspace{-.1em}I}_{(12)})$ moves, where we only consider inner conjugations as group automorphisms. Compare with Main Theorems in \cite{mimurass}.

These moves in $(\mathrm{Game}^{\mathrm{inn}})$ represent ways of upgrading. Rough meaning is, for an affine isometric action $\alpha\colon G\curvearrowright \mathcal{H}$, ``if $\xi \in \mathcal{H}^{\alpha(M)}$ and $\eta \in \mathcal{H}^{\alpha(M)}$ are chosen in a special manner, then \textit{information on $\xi$ and $\eta$ is automatically upgraded; more precisely, in each stage $\iota$ of $(\mathrm{Game}^{\mathrm{inn}})$, $\xi\in \mathcal{H}^{\alpha(H_1^{(\iota)})}$ and $\eta \in \mathcal{H}^{\alpha(H_2^{(\iota)})}$}.'' See Proposition~\ref{proposition=si} for the rigorous statement. There might be some formal similarity to \textit{Mautner phenomena}, which is upgrading process in continuous setting (unitary representations of Lie groups) with the aid of  limiting arguments; see \cite[Lemma~1.4.8]{BHV}.

Let us see how we employ this criterion to the case of $G=\mathrm{E}(n,R)$.

\begin{proof}[Proof of ``Theorem~$\ref{theorem=synthesis}$ implies Theorem~$\ref{theorem=EJ}$'']
Let $R$ and $n$ be as in Theorem~\ref{theorem=EJ}. Set $G=\mathrm{E}(n,R)$, $M=
\left(\begin{array}{cc}
I_{n-1} & R^{n-1} \\
0 & 1 
\end{array}
\right)(\simeq (R^{n-1},+))$, and 
$L=
\left(\begin{array}{cc}
I_{n-1} & 0 \\
{}^t(R^{n-1}) & 1 
\end{array}
\right)$. 
Note that by $(\#)$, $\langle M,L\rangle =G$.

The aforementioned original form of Theorem~\ref{theorem=kassabov} by Kassabov implies
\begin{lemma}\label{lemma=kassabov}
These $G\geqslant M$ and $G\geqslant L$ have relative property $(\mathrm{T})$.
\end{lemma}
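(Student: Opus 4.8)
The plan is to realize $M$ and $L$ as the distinguished normal subgroups of two ``affine-type'' subgroups of $G=\mathrm{E}(n,R)$ to which Kassabov's original form of Theorem~\ref{theorem=kassabov} applies verbatim, and then to pass from those subgroups to $G$ using the transitivity of relative property $(\mathrm{T})$ noted right after Theorem~\ref{theorem=kassabov}: if $G\geqslant G_0\geqslant M_0\geqslant M$, then relative property $(\mathrm{T})$ for $G\geqslant M$ follows from that for $G_0\geqslant M_0$.

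Concretely, for $M$ I would set $G_0:=\left\{\left(\begin{array}{cc} A & v \\ 0 & 1 \end{array}\right): A\in\mathrm{E}(n-1,R),\ v\in R^{n-1}\right\}$. First I would check that $G_0\leqslant G$, not merely $G_0\leqslant\mathrm{GL}(n,R)$: it is generated by the elementary matrices $e_{i,j}^r$ with $i\ne j\in\{1,\ldots,n-1\}$ (these generate the top-left $\mathrm{E}(n-1,R)$-block) together with the $e_{i,n}^r$ with $i\in\{1,\ldots,n-1\}$ (these generate the column $R^{n-1}$), via the factorization $\left(\begin{array}{cc} A & v \\ 0 & 1 \end{array}\right)=\left(\begin{array}{cc} A & 0 \\ 0 & 1 \end{array}\right)\left(\begin{array}{cc} I_{n-1} & A^{-1}v \\ 0 & 1 \end{array}\right)$; all of these generators lie in $G$. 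As an abstract group $G_0\simeq \mathrm{E}(n-1,R)\ltimes R^{n-1}$ for the left-multiplication action of $\mathrm{E}(n-1,R)$ on column vectors, and under this identification $M$ is precisely the normal subgroup $R^{n-1}$. Since $n-1\geq2$ (by $n\geq3$), Kassabov's original form gives relative property $(\mathrm{T})$ for $\mathrm{E}(n-1,R)\ltimes R^{n-1}\trianglerighteq R^{n-1}$, i.e.\ for $G_0\geqslant M$; as $G\geqslant G_0\geqslant M$, transitivity yields relative property $(\mathrm{T})$ for $G\geqslant M$. The argument for $L$ is the mirror image: take $L_0:=\left\{\left(\begin{array}{cc} A & 0 \\ u & 1 \end{array}\right): A\in\mathrm{E}(n-1,R),\ u\in{}^t(R^{n-1})\right\}$, generated inside $G$ by the $e_{i,j}^r$ with $i\ne j\in\{1,\ldots,n-1\}$ and the $e_{n,j}^r$ with $j\in\{1,\ldots,n-1\}$; it is isomorphic to ${}^t(R^{n-1})\rtimes\mathrm{E}(n-1,R)$ for the right-multiplication action on row vectors, with $L$ corresponding to the normal subgroup ${}^t(R^{n-1})$, so Kassabov's original form together with transitivity along $G\geqslant L_0\geqslant L$ finishes the proof.

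I do not expect a genuine obstacle here: the only non-formal input is Kassabov's theorem itself, whose proof — via the spectral theory of unitary representations of the abelian groups $R^{n-1}$ and ${}^t(R^{n-1})$, in the spirit of \cite{shalom1999} and \cite[Sections~4.2 and 4.3]{BHV} — we do not reproduce. The one point that deserves a moment's care is the verification that $G_0$ and $L_0$ actually lie in $\mathrm{E}(n,R)$, which is immediate once their elementary generators are listed as above; everything else is a matter of unwinding the semidirect-product identifications.
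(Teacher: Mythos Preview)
Your proposal is correct and is precisely the argument the paper intends: it explicitly states that the lemma follows from Kassabov's original form for $\mathrm{E}(n-1,R)\ltimes R^{n-1}\trianglerighteq R^{n-1}$ and ${}^t(R^{n-1})\rtimes\mathrm{E}(n-1,R)\trianglerighteq{}^t(R^{n-1})$ via the embedding and transitivity remark after Theorem~\ref{theorem=kassabov}, and you have simply written out those details.
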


What remains is to check hypothesis $(\mathrm{GAME}^{\mathrm{inn}})$ for the triple $(G,M,L)$ above. First, take type $(\mathrm{I})$ move with $Q=
\left(\begin{array}{cc}
\mathrm{E}({n-1},R) & 0 \\
0 & 1 
\end{array}
\right)$. 
Then, the upgrading is
\renewcommand{\arraystretch}{1.7}
\begin{table}[h]
 \begin{tabular}{c|c}
   $H_1^{(1)}$  & $H_2^{(1)}$ \\ \hline 
$\langle H_1^{(0)},Q\rangle=\left(\begin{array}{cc}
\mathrm{E}({n-1},R) & R^{n-1} \\
0 & 1 
\end{array}
\right)$&$ \langle H_2^{(0)},Q\rangle=\left(\begin{array}{cc}
\mathrm{E}({n-1},R) & 0 \\
{}^t(R^{n-1}) & 1 
\end{array}
\right)$.
  \end{tabular}
\end{table}
\renewcommand{\arraystretch}{1.0}

Finally, take type $(\mathrm{I\hspace{-.1em}I}^{\mathrm{inn}})$ move with $W=\left\{w\right\}$. Here, $w=\left(\begin{array}{ccc}
0 & 0 & 1\\
0 & I_{n-3,1} & 0 \\
1 & 0 & 0
\end{array}
\right)$, where $I_{n-3,1}$ denotes the diagonal matrix with diagonals $1,\ldots ,1$ ($n-3$ times) and $-1$. Note that $w=(e_{n-1,n}^1e_{n,n-1}^{-1}e_{n-1,n}^1)^2(e_{1,n}^1e_{n,1}^{-1}e_{1,n}^1) \in G$. The \textit{miracle} here is 
\begin{align*}
 wH_2^{(1)}w^{-1}=\left(\begin{array}{cc}
1 & {}^t(R^{n-1}) \\
0 & \mathrm{E}({n-1},R) 
\end{array}
\right) \geqslant M,\ \mathrm{and} \quad 
wH_1^{(1)}w^{-1}=\left(\begin{array}{cc}
1 & 0 \\
R^{n-1} &  \mathrm{E}({n-1},R)
\end{array}
\right) \geqslant L.
\end{align*}
Therefore, our upgrading process goes as follows.

\renewcommand{\arraystretch}{1.7}
\begin{table}[h]
 \begin{tabular}{c|c}
   $H_1^{(2)}$  & $H_2^{(2)}$ \\ \hline 
$\langle H_1^{(1)},w^{-1}H_2^{(1)}w\rangle=\left\langle\left(\begin{array}{cc}
\mathrm{E}({n-1},R) & R^{n-1} \\
0 & 1 
\end{array}
\right),\left(\begin{array}{cc}
1 & {}^t(R^{n-1}) \\
0 & \mathrm{E}(n-1,R) 
\end{array}
\right) \right\rangle$&$ \langle H_2^{(1)},w^{-1}H_1^{(1)}w\rangle$.
  \end{tabular}
\end{table}
\renewcommand{\arraystretch}{1.0}

Since $[e_{n,2}^r,e_{2,1}^1]=e_{n,1}^r$ for $n\geq 3$ by $(\#)$, the new $H_1^{(2)}$ and $H_2^{(2)}$ both equal $G$. 

Therefore, we are done.
\end{proof}
We will prove Theorem~\ref{theorem=synthesis} in Section~\ref{section=proof}. The combination of these proofs provides our new proof of Theorem~\ref{theorem=EJ}.

\begin{remark}\label{remark=bg}
The upgrading argument above is intrinsic, but \textit{it does not use any form of }(\textit{non-trivial}) \textit{bounded generation}, whose definition is as follows.
\begin{definition}\label{definition=bg}
Let $(1_G\in) U=U^{-1}$ and $X$ be non-empty subsets of $G$. We say that $U$ \textit{boundedly generates} $X$ if there exists $N\in \mathbb{N}$ such that $U^N\supseteq X$, where $U^N(\subseteq G)$ denotes the image of $U\times \cdots \times U$ (the $N$-time direct product) by the iterated multiplication map $G\times \cdots \times G \to G;\ (g_1,\ldots ,g_N)\mapsto g_1\cdots g_N$.
\end{definition}
\noindent
\textit{Note.} In some literature, ``bounded generation'' is used in the following very restricted way: $U=\bigcup_{1\leq j\leq l}C_j$ for $C_j$ cyclic subgroups and $X=G$. Our convention is much more general. 

Study of intrinsic upgrading was initiated and developed by works of Shalom \cite{shalom1999}, \cite{shalom2006}. Here we recall the definition of \textit{displacement functions}.

\begin{definition}\label{definition=displacement}
Let $\alpha\colon G\curvearrowright \mathcal{H}$ be an affine isometric action of a countable discrete group. Let $A\subseteq G$ be a non-empty subset. Then, the \textit{displacement} (function) \textit{over $A$} is the function
\[
\mathrm{disp}_{\alpha}^{A}\colon \mathcal{H}\to \mathbb{R}_{\geq 0}\cup\{+\infty\};\quad \mathcal{H}\ni \zeta \mapsto \mathrm{disp}_{\alpha}^{A}(\zeta)=\sup_{a\in A}\|\alpha(a)\cdot \zeta-\zeta\|.
\]
\end{definition}
Note that $\mathrm{disp}_{\alpha}^{A}$ is $2$-Lipschitz if it takes a finite value at some point $\zeta\in \mathcal{H}$ (this condition does not depend on the choice of $\zeta$).

\begin{proposition}[Shalom's bounded generation argument, \cite{shalom1999}]\label{proposition=boundedgeneration}
Let $\alpha\colon G\curvearrowright \mathcal{H}$ be an affine isometric action of a countable discrete group. Let $M_1,\ldots,M_l$ be subgroups of $G$. Assume that for each $1\leq j\leq l$, $\mathcal{H}^{\alpha(M_j)}\ne \emptyset$. Assume the following $\mathrm{bounded\ generation\ hypothesis}$: $\bigcup_{1\leq j\leq l}M_j$ boundedly generates $G$. 

Then, $\mathcal{H}^{\alpha(G)}\ne \emptyset.$
\end{proposition}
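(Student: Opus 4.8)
The plan is to avoid any ``almost fixed point'' or spectral input and instead argue directly at the level of the \emph{displacement function}: I will show that the bounded generation hypothesis forces \emph{some} $G$-orbit to be bounded, and then invoke the classical circumcenter argument to extract a genuine $G$-fixed point. The only non-elementary ingredient is this last ``lemma of the center'' (a.k.a.\ the Bruhat--Tits fixed point property), which I will quote.

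\textbf{Step 1 (each $M_j$ has finite displacement).} Fix an arbitrary basepoint $\zeta_0\in\mathcal{H}$. For each $j$ the hypothesis $\mathcal{H}^{\alpha(M_j)}\neq\emptyset$ together with the remark following Definition~\ref{definition=displacement} shows that $\mathrm{disp}_{\alpha}^{M_j}$ is finite (indeed $2$-Lipschitz) everywhere on $\mathcal{H}$; in particular $\mathrm{disp}_{\alpha}^{M_j}(\zeta_0)<\infty$. Put $U:=\bigcup_{1\le j\le l}M_j$, which is a symmetric subset of $G$ containing $1_G$; then $\mathrm{disp}_{\alpha}^{U}(\zeta_0)=\max_{1\le j\le l}\mathrm{disp}_{\alpha}^{M_j}(\zeta_0)<\infty$.

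\textbf{Step 2 (subadditivity along products, and bounded orbit).} For all $g_1,\dots,g_N\in G$ and all $\zeta\in\mathcal{H}$, an induction on $N$ using the triangle inequality and the fact that each $\alpha(g_i)$ is an isometry gives
\[
\|\alpha(g_1\cdots g_N)\zeta-\zeta\|\ \le\ \sum_{i=1}^{N}\|\alpha(g_i)\zeta-\zeta\|,
\]
hence $\mathrm{disp}_{\alpha}^{U^{N}}(\zeta)\le N\cdot\mathrm{disp}_{\alpha}^{U}(\zeta)$ for every $N$ and $\zeta$. By Definition~\ref{definition=bg}, the bounded generation hypothesis supplies $N$ with $U^{N}\supseteq G$; since automatically $U^{N}\subseteq G$, in fact $U^{N}=G$. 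Combining this with Step~1, $\mathrm{disp}_{\alpha}^{G}(\zeta_0)=\mathrm{disp}_{\alpha}^{U^{N}}(\zeta_0)\le N\cdot\mathrm{disp}_{\alpha}^{U}(\zeta_0)<\infty$. Thus $\sup_{g\in G}\|\alpha(g)\zeta_0-\zeta_0\|<\infty$, i.e.\ the $G$-orbit of $\zeta_0$ is a bounded subset of $\mathcal{H}$.

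\textbf{Step 3 (circumcenter).} A bounded subset $B$ of a Hilbert space has a unique circumcenter (the center of the ball of minimal radius containing $B$), and this assignment commutes with affine isometries; see \cite[Section~2.2]{BHV}. Applying this to the closure of the bounded orbit from Step~2, which is preserved by every $\alpha(g)$, its circumcenter is fixed by $\alpha(G)$, so $\mathcal{H}^{\alpha(G)}\neq\emptyset$. There is no serious obstacle here: the one non-trivial ingredient is the center lemma of Step~3, which is standard; the point to be careful about is that ``bounded generation'' must be used precisely in the form $U^{N}=G$ with $U$ a \emph{finite} union of the $M_j$, so that $\mathrm{disp}_{\alpha}^{U}$ is dominated by finitely many $2$-Lipschitz functions. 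Note also that no separability or continuity assumption on $\mathcal{H}$ enters, so the statement is stable under the metric ultraproducts used elsewhere in the paper.
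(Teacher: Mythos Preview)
Your proof is correct and follows essentially the same route as the paper: both argue that the displacement over $U=\bigcup_j M_j$ is finite at any point, use the subadditivity $\mathrm{disp}^{AB}_\alpha\le \mathrm{disp}^{A}_\alpha+\mathrm{disp}^{B}_\alpha$ together with $U^N\supseteq G$ to obtain a bounded $G$-orbit, and then take the Chebyshev center (circumcenter). Your write-up is simply a bit more explicit about the induction and about why $l<\infty$ matters.
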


\begin{proof}
By triangle inequality and isometry of $\alpha$, observe that for non-empty subsets $A$ and $B$ in $G$ and for every $\zeta\in \mathcal{H}$, $\mathrm{disp}^{AB}_{\alpha}(\zeta)\leq \mathrm{disp}^{A}_{\alpha}(\zeta)+\mathrm{disp}^{B}_{\alpha}(\zeta)$. Here $AB(\subseteq G)$ is defined as the image of $A\times B$ by the multiplication map $(g_1,g_2)\mapsto g_1g_2$. 

Let $U=\bigcup_{1\leq j\leq l}M_j$. Pick one $\zeta\in \mathcal{H}$. By assumption of $\mathcal{H}^{\alpha(M_j)}\ne \emptyset$, $\mathrm{disp}^{U}_{\alpha}(\zeta)<\infty$. By bounded generation hypothesis, it follows that $\mathrm{disp}^{G}_{\alpha}(\zeta)<\infty$. It means that, the $G$-orbit of $\zeta$, $\alpha(G)\cdot \zeta$, is bounded. Then, the (unique) Chebyshev center \cite[Lemma~2.2.7]{BHV} of that set is $\alpha(G)$-fixed.
\end{proof}

In Shalom's intrinsic upgrading, some forms of (non-trivial) bounded generation hypotheses were essential. This \textit{was} a bottle-neck for intrinsic upgrading, because these hypotheses are super-strong in general. For instance, bounded generation for $\mathrm{E}(n,R)$, $n\geq 3$, by elementary matrices ($\bigcup_{i\ne j}G_{i,j}$, where $G_{i,j}$ is as in Theorem~\ref{theorem=kassabov}) is true for $R=\mathbb{Z}$ (Carter--Keller; see \cite[Section~4.1]{BHV}), \textit{false} for $R=\mathbb{C}[t]$ for every $n(\geq 2)$ (van der Kallen), and \textit{open} in many cases, even for $R=\mathbb{Z}[t]$.
\end{remark}

\section{Proof of Theorem~\ref{theorem=synthesis}}\label{section=proof}
\subsection{The upshot of upgrading}\label{subsection=si}
The idea of the proof is inspired by Shalom's second intrinsic upgrading \cite[4.III]{shalom2006} (with bounded generation); Shalom himself called it \textit{algebraization}. The \textit{upshot} of our upgrading is the following.

\begin{proposition}[The \textit{upshot} of our upgrading]\label{proposition=si}
Let $G$ be a finitely generated group.  Let $M,L\leqslant G$ with $\langle M,L\rangle =G$. Let $\alpha\colon G\curvearrowright \mathcal{H}$ be an affine isometric action. Assume that the linear part $\pi$ of $\alpha$ does not have non-zero $G$-invariant vectors. Assume $\mathcal{H}^{\alpha(M)}\ne \emptyset$ and $\mathcal{H}^{\alpha(L)}\ne \emptyset$. Assume, besides, that $(\xi,\eta)\in \mathcal{H}^{\alpha(M)}\times \mathcal{H}^{\alpha(L)}$ realizes the distance 
\[
D:=\mathrm{dist}(\mathcal{H}^{\alpha(M)},\mathcal{H}^{\alpha(L)})(=\inf \left\{\|\zeta_1-\zeta_2\|:\zeta_1\in \mathcal{H}^{\alpha(M)},\, \zeta_2\in \mathcal{H}^{\alpha(L)} \right\}).
\]

Then, in each stage $\iota$ in $(\mathrm{Game}^{\mathrm{inn}})$ for $(G,M,L)$, $\xi \in \mathcal{H}^{\alpha(H_1^{(\iota)})}$ and $\eta \in \mathcal{H}^{\alpha(H_2^{(\iota)}))}$.
\end{proposition}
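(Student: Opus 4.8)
The plan is a transfinite induction on the stage $\iota$ of $(\mathrm{Game}^{\mathrm{inn}})$, propagating the invariant ``$\xi\in\mathcal{H}^{\alpha(H_1^{(\iota)})}$ and $\eta\in\mathcal{H}^{\alpha(H_2^{(\iota)})}$''. The base case $\iota=0$ is precisely the hypothesis $\xi\in\mathcal{H}^{\alpha(M)}$, $\eta\in\mathcal{H}^{\alpha(L)}$. At a non-zero limit ordinal the invariant passes to the union since the chains $(H_j^{(\iota')})$ are increasing and every element of $\bigcup_{\iota'<\iota}H_j^{(\iota')}$ lies in some $H_j^{(\iota')}$. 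So all the real content is in the two successor moves, and before attacking them I would prove the engine of the whole argument: \emph{under the stated hypotheses the distance-realizing pair $(\xi,\eta)$ is unique}.

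For this uniqueness, recall that $\mathcal{H}^{\alpha(M)}$ and $\mathcal{H}^{\alpha(L)}$ are closed affine, hence convex, subsets. Let $(\xi',\eta')\in\mathcal{H}^{\alpha(M)}\times\mathcal{H}^{\alpha(L)}$ with $\|\xi'-\eta'\|=D$. The midpoint $\bigl(\frac{\xi+\xi'}{2},\frac{\eta+\eta'}{2}\bigr)$ again lies in the convex set $\mathcal{H}^{\alpha(M)}\times\mathcal{H}^{\alpha(L)}$, so $\bigl\|\frac{\xi+\xi'}{2}-\frac{\eta+\eta'}{2}\bigr\|\geqslant D$, while the triangle inequality gives $\leqslant D$; thus $\xi-\eta$ and $\xi'-\eta'$, both of norm $D$, have average of norm $D$, and the parallelogram law forces $\xi-\eta=\xi'-\eta'$. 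Writing $u:=\xi'-\xi=\eta'-\eta$ and subtracting $\alpha(m)\xi=\xi$ from $\alpha(m)\xi'=\xi'$ yields $\pi(m)u=u$ for all $m\in M$, and similarly $\pi(l)u=u$ for all $l\in L$; hence $u$ is $\pi(\langle M,L\rangle)=\pi(G)$-invariant, so $u=0$. This is the one place where ``$\pi$ has no non-zero $G$-invariant vector'' is used, and it is indispensable: without it, two parallel fixed-point subspaces would already break uniqueness.

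Now the successor step. For a type $(\mathrm{I})$ move with set $Q$ and any $\gamma\in Q$: the relation $\gamma H_1^{(\iota-1)}\gamma^{-1}\geqslant M$ together with the inductive hypothesis $\xi\in\mathcal{H}^{\alpha(H_1^{(\iota-1)})}$ gives $\alpha(\gamma)\xi\in\mathcal{H}^{\alpha(M)}$, and symmetrically $\alpha(\gamma)\eta\in\mathcal{H}^{\alpha(L)}$; since $\alpha(\gamma)$ is an isometry, $\|\alpha(\gamma)\xi-\alpha(\gamma)\eta\|=D$, so by uniqueness $\alpha(\gamma)\xi=\xi$ and $\alpha(\gamma)\eta=\eta$. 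Hence $\xi$ is fixed by $\langle H_1^{(\iota-1)},Q\rangle=H_1^{(\iota)}$ and $\eta$ by $H_2^{(\iota)}$. For a type $(\mathrm{II}^{\mathrm{inn}})$ move with set $W$ and any $w\in W$: now $wH_2^{(\iota-1)}w^{-1}\geqslant M$ with $\eta\in\mathcal{H}^{\alpha(H_2^{(\iota-1)})}$ gives $\alpha(w)\eta\in\mathcal{H}^{\alpha(M)}$, while $wH_1^{(\iota-1)}w^{-1}\geqslant L$ with $\xi\in\mathcal{H}^{\alpha(H_1^{(\iota-1)})}$ gives $\alpha(w)\xi\in\mathcal{H}^{\alpha(L)}$ — the roles of $\xi$ and $\eta$ are exchanged, which is exactly the point of the ``inn'' move. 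As $\mathrm{dist}$ is symmetric, $(\alpha(w)\eta,\alpha(w)\xi)$ realizes $D$, so uniqueness yields $\alpha(w)\eta=\xi$ and $\alpha(w)\xi=\eta$. From $\alpha(w)\xi=\eta\in\mathcal{H}^{\alpha(H_2^{(\iota-1)})}$ one reads off $\xi\in\mathcal{H}^{\alpha(w^{-1}H_2^{(\iota-1)}w)}$, which with $\xi\in\mathcal{H}^{\alpha(H_1^{(\iota-1)})}$ is exactly $\xi\in\mathcal{H}^{\alpha(H_1^{(\iota)})}$; symmetrically for $\eta$. This closes the induction.

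The step I expect to be the crux is the uniqueness lemma — more precisely, the conceptual point of propagating membership of the \emph{distance-minimizing} fixed pair, rather than of an arbitrary pair of fixed points, and recognizing that the no-invariant-vectors hypothesis is exactly what pins this pair down. Once that is in hand, each move of $(\mathrm{Game}^{\mathrm{inn}})$ does no more than transport the (possibly swapped) pair back into $\mathcal{H}^{\alpha(M)}\times\mathcal{H}^{\alpha(L)}$ and let uniqueness snap it back to $(\xi,\eta)$. The only remaining care is routine transfinite bookkeeping — the moves are indexed by arbitrary ordinals and $Q$, $W$ need not be finite — but nothing beyond ``a vector fixed by a generating set is fixed by the group'' is needed there.
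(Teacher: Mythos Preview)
Your proof is correct and follows essentially the same approach as the paper: transfinite induction on $\iota$, with the key ingredient being the uniqueness of the distance-realizing pair (the paper isolates this as Lemma~\ref{lemma=shalom}, ``Shalom's parallelogram argument''), proved via the midpoint/strict convexity argument together with $\mathcal{H}^{\pi(G)}=\{0\}$, and then the two successor cases are handled exactly as you do.
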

Recall an affine isometric action $\alpha$ is decomposed into linear part $\pi$ (unitary representation) and cocycle part $b$: $\alpha(g)\cdot \zeta=\pi(g)\zeta+b(g)$ (see \cite[Section~2.1]{BHV}).

The key to the proof of Proposition~$\ref{proposition=si}$ is the following observation due to Shalom.

\begin{lemma}[\textit{Shalom's parallelogram argument}; see 4.III.6 in \cite{shalom2006}]\label{lemma=shalom}
In the setting as in Proposition~$\ref{proposition=si}$, the realizer  of $D$ is unique.
\end{lemma}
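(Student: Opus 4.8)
The plan is to exploit convexity: both fixed-point sets $\mathcal{H}^{\alpha(M)}$ and $\mathcal{H}^{\alpha(L)}$ are non-empty closed convex subsets of $\mathcal{H}$ (they are intersections of fixed-point sets of individual isometries, each a closed affine subspace), so the distance $D$ between them is realized, and I want to show the realizer is unique. Suppose $(\xi_1,\eta_1)$ and $(\xi_2,\eta_2)$ are two realizers. The first step is to average: set $\xi=\tfrac12(\xi_1+\xi_2)\in\mathcal{H}^{\alpha(M)}$ and $\eta=\tfrac12(\eta_1+\eta_2)\in\mathcal{H}^{\alpha(L)}$, using convexity of the two sets. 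Then $\|\xi-\eta\|\le\tfrac12\|\xi_1-\eta_1\|+\tfrac12\|\xi_2-\eta_2\|=D$, so in fact $\|\xi-\eta\|=D$, and the parallelogram law forces $(\xi_1-\eta_1)-(\xi_2-\eta_2)=0$, i.e. $\xi_1-\eta_1=\xi_2-\eta_2$. Call this common vector $v$; so the difference vector of any realizing pair is the same $v$, and what remains is to show the pair itself is unique, i.e. $\xi_1=\xi_2$ (equivalently $\eta_1=\eta_2$).

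To pin down the pair, I would use the hypothesis that the linear part $\pi$ has no non-zero $G$-invariant vector — this is where the algebraic input enters, and it is the step I expect to be the crux. The idea (this is Shalom's parallelogram trick) is to show $\xi_1-\xi_2$ is $\pi(G)$-invariant and hence zero. Since $\xi_1,\xi_2\in\mathcal{H}^{\alpha(M)}$, for $m\in M$ we have $\alpha(m)\xi_i=\xi_i$, so $\pi(m)(\xi_1-\xi_2)=\xi_1-\xi_2$: the difference is $\pi(M)$-invariant. Symmetrically, $\eta_1-\eta_2=\xi_1-\xi_2$ (since both equal $v-v$... more precisely from $\xi_i-\eta_i=v$ we get $\xi_1-\xi_2=\eta_1-\eta_2$), and this common difference is $\pi(L)$-invariant by the same computation with $\eta_i\in\mathcal{H}^{\alpha(L)}$. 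Therefore $\xi_1-\xi_2$ is invariant under both $\pi(M)$ and $\pi(L)$, hence under $\pi(\langle M,L\rangle)=\pi(G)$. By the no-invariant-vectors hypothesis, $\xi_1-\xi_2=0$, so $\xi_1=\xi_2$ and $\eta_1=\eta_2$, proving uniqueness.

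I should double-check the averaging step handles the case where only the difference vector, not the pair, is initially constrained: the parallelogram identity gives $\|\xi_1-\eta_1\|^2+\|\xi_2-\eta_2\|^2=2\|\xi-\eta\|^2+\tfrac12\|(\xi_1-\eta_1)-(\xi_2-\eta_2)\|^2$, and with all left-hand and the $2\|\xi-\eta\|^2$ terms equal to $2D^2$ we force the last term to vanish — this is clean and routine. The genuinely important point, and the one carrying the weight of the argument, is the passage from ``$\pi(M)$-invariant and $\pi(L)$-invariant'' to ``$\pi(G)$-invariant'': here one uses that invariant vectors of a unitary representation form a closed subspace stable under the group, and $\langle M,L\rangle=G$. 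No further delicacy is needed since $\pi$ is unitary (isometric linear part), so pointwise $\pi(M)$-fixed and $\pi(L)$-fixed vectors are exactly the $\pi(G)$-fixed vectors. Thus the whole proof is: (i) realizers exist by closed-convexity; (ii) average and apply the parallelogram law to get a common difference vector; (iii) observe the difference of two realizing pairs is $\pi(G)$-invariant; (iv) invoke the hypothesis to conclude it is zero.
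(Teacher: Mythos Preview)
Your uniqueness argument is correct and is exactly the paper's proof: average two realizers, use strict convexity (equivalently, the parallelogram law) to force the difference vectors to agree, then observe $\xi_1-\xi_2=\eta_1-\eta_2$ is simultaneously $\pi(M)$- and $\pi(L)$-invariant, hence $\pi(G)$-invariant, hence zero.

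One correction to your step~(i): you assert that the distance $D$ is realized because the two fixed-point sets are non-empty closed convex (indeed, closed affine) subsets. This is not part of what the lemma asks---in the setting of Proposition~\ref{proposition=si} a realizer is \emph{assumed} to exist---and more importantly it is false in general: the distance between two closed affine subspaces of an infinite-dimensional Hilbert space need not be attained. This is precisely why the paper devotes Subsection~\ref{subsection=up} to manufacturing a realizer via metric ultraproducts (Propositions~\ref{proposition=gs} and~\ref{proposition=shalom}). So drop step~(i); the remaining steps~(ii)--(iv) are the complete proof.
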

\begin{proof}
Let $(\xi',\eta')$ be another realizer. Take midpoints $(m_1,m_2)$, where $m_1=(\xi+\xi')/2$ and $m_2=(\eta+\eta')/2$. Observe that $(m_1,m_2)$ is again a realizer of $D$; indeed, $m_1\in \mathcal{H}^{\alpha(M)}$, $m_2\in \mathcal{H}^{\alpha(L)}$, and $\|m_1-m_2\|\leq D$ (by triangle inequality).

Then, by strict convexity of $\mathcal{H}$, $\xi-\eta=\xi'-\eta'$. Note that $\xi-\xi'\in \mathcal{H}^{\pi(M)}$ and $\eta-\eta'\in \mathcal{H}^{\pi(L)}$. Therefore, $\xi-\xi'=\eta-\eta'\in \mathcal{H}^{\pi(G)}$; hence these equal $0$.
\end{proof}
\begin{proof}[Proof of Proposition~$\ref{proposition=si}$]
By (transcendental) induction on $\iota$. For $\iota=0$, the assertion holds. We proceed in induction step. If $\iota$ is a non-zero limit ordinal, then the upgrading from all $\iota'<\iota$ to $\iota$ in the rules of $(\mathrm{Game}^{\mathrm{inn}})$ straightforwardly works. Finally, we deal with the case where $\iota-1$ exists. Assume that $\xi \in \mathcal{H}^{\alpha(H_1^{(\iota-1)})}$ and $\eta \in \mathcal{H}^{\alpha(H_2^{(\iota-1)})}$; we take a new move.

\begin{itemize}
  \item \textit{Case~$1$. New move is of type $(\mathrm{I})$}. This case was essentially done by Shalom \cite[4.III.6]{shalom2006}. Let $\gamma\in Q$. The conditions $\gamma H_1^{(\iota-1)}\gamma^{-1}\geqslant M$ and $\gamma H_2^{(\iota-1)} \gamma^{-1}\geqslant L$ are imposed on $Q$ exactly in order to ensure that $\alpha(\gamma)\cdot \xi \in \mathcal{H}^{\alpha(M)}$ and that $\alpha(\gamma)\cdot \eta \in \mathcal{H}^{\alpha(L)}$. By isometry of $\alpha$, $(\xi,\eta)$ and $(\alpha(h)\cdot \xi,\alpha(h)\cdot \eta)$ are two realizers of $D$, and Lemma~\ref{lemma=shalom} applies. Therefore, we obtain that $\xi \in \mathcal{H}^{\alpha(Q)}\cap \mathcal{H}^{\alpha(H_1^{(\iota-1)})}=\mathcal{H}^{\alpha(\langle H_1^{(\iota-1)},Q\rangle)}$; similarly, $\eta\in \mathcal{H}^{\alpha(\langle H_2^{(\iota-1)},Q\rangle)}$.
  \item \textit{Case~$2$. New move is of type $(\mathrm{I\hspace{-.1em}I}^{\mathrm{inn}})$}. Let $w\in W$. Then, the condition on $w$ implies that $\alpha(w)\cdot \eta \in \mathcal{H}^{\alpha(M)}$ and that $\alpha(w)\cdot \xi \in \mathcal{H}^{\alpha(L)}$. Hence, this time $(\alpha(w)\cdot \eta, \alpha(w)\cdot \xi)$ is another realizer of $D$. Again by Lemma~\ref{lemma=shalom}, $\alpha(w)\cdot \xi=\eta$. By recalling $\eta \in \mathcal{H}^{\alpha(H_2^{(\iota-1)})}$, we conclude that $(\mathcal{H}^{\alpha(H_1^{(\iota-1)})}\ni )\xi \in  \mathcal{H}^{\alpha(w^{-1}H_2^{(\iota-1)}w)}$ for all $w\in W$. Similarly, $(\mathcal{H}^{\alpha(H_2^{(\iota-1)})}\ni)\eta\in \mathcal{H}^{\alpha(w^{-1}H_1^{(\iota-1)}w)}$ for all $w\in W$.
\end{itemize}
\end{proof}

\subsection{Metric ultraproducts, scaling limits, and realizers of the distance}\label{subsection=up}
Proposition~\ref{proposition=si} might look convincing, but there is a \textit{gap} to conclude Theorem~\ref{theorem=synthesis}. In general, there is \textit{no} guarantee on the existence of \textit{realizers} $(\xi, \eta)$ of $D$. 

This gap will be fixed by well-known Propositions~\ref{proposition=gs} and \ref{proposition=shalom} below. Nevertheless,   we include (sketchy) proofs for the reader's convenience. They employ metric ultraproducts. The reader who is familiar with this topic may skip this subsection.

\begin{definition}[uniform action]\label{definition=uniform}
Let $G$ be a finitely generated group and let $S=S^{-1}$ be a  finite   generating set of $G$. 
Let $\alpha\colon G\curvearrowright \mathcal{H}$ be an affine isometric action. The action $\alpha$ is said to be \textit{$(S,1)$-uniform} if $\inf_{\zeta\in \mathcal{H}}\mathrm{disp}_{\alpha}^S(\zeta)\geq 1$. For a fixed $S$, we simply say $\alpha$ to be $1$\textit{-uniform} for short.
\end{definition}

Fix a finitely generated group $G$; fix moreover a finite generating set $S$.  We set the following three classes of actions and Hilbert spaces.
\begin{itemize}
  \item $\mathcal{C}:=\{(\alpha,\mathcal{H})\}$, where $\mathcal{H}$ is a Hilbert space and $\alpha\colon G\curvearrowright \mathcal{H}$ is an affine isometric action.
  \item $\mathcal{C}^{\mathrm{non}\textrm{-}\mathrm{fixed}}:=\{(\alpha,\mathcal{H}):(\alpha,\mathcal{H})\in \mathcal{C},\ \mathcal{H}^{\alpha(G)}=\emptyset\}$.
  \item $\mathcal{C}^{1\textrm{-}\mathrm{uniform}}:=\{(\alpha,\mathcal{H}):(\alpha,\mathcal{H} )\in \mathcal{C},\ \textrm{$\alpha$ is $1$-uniform.}\}$.
\end{itemize}
Then, $\mathcal{C}^{1\textrm{-}\mathrm{uniform}}$ is a subclass of $\mathcal{C}^{\mathrm{non}\textrm{-}\mathrm{fixed}}$. The failure of property $(\mathrm{T})$ for $G$ exactly says that $\mathcal{C}^{\mathrm{non}\textrm{-}\mathrm{fixed}}\ne \emptyset$.

In the two propositions below, let $(G,S)$ be as in Definition~\ref{definition=uniform}.

\begin{proposition}[A special case of the \textit{Gromov--Schoen argument}; see also 4.III.2 in \cite{shalom2006}]\label{proposition=gs}
Assume $G$ fails to have property $(\mathrm{T})$. Then, $\mathcal{C}^{1\textrm{-}\mathrm{uniform}}\ne \emptyset$.
\end{proposition}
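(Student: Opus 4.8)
The plan is to produce the required $1$-uniform action as a \emph{scaling limit} (a pointed metric ultraproduct of rescaled copies) of a single fixed-point-free action, following the pattern of the Gromov--Schoen argument and of Shalom \cite[4.III]{shalom2006}.

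Since $G$ fails property $(\mathrm{T})=(\mathrm{FH})$, fix an affine isometric action $\alpha\colon G\curvearrowright\mathcal{H}$ with $\mathcal{H}^{\alpha(G)}=\emptyset$, and write $\psi:=\mathrm{disp}_{\alpha}^{S}$. This $\psi$ is finite-valued (as $S$ is finite), $2$-Lipschitz, and everywhere strictly positive, because a zero of $\psi$ would be an $S$-fixed, hence $G$-fixed, point. Set $c:=\inf_{\zeta\in\mathcal{H}}\psi(\zeta)\ge 0$. If $c>0$, simply rescale the inner product of $\mathcal{H}$ by $c^{-2}$; the resulting action is then $(S,1)$-uniform and we are done. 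So from now on assume $c=0$.

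The key point is to choose base points at which $\psi$ is not merely small but \emph{almost locally minimal on the relevant scale}; this is exactly what Ekeland's variational principle delivers. Fix $\epsilon_n\downarrow 0$ with $\epsilon_n<1$. Applying Ekeland's principle to the lower semicontinuous, bounded-below function $\psi$ with parameter $\lambda_n:=\sqrt{\epsilon_n}$ (starting from any point where $\psi\le\epsilon_n$, which exists since $\inf\psi=0$) produces $\zeta_n\in\mathcal{H}$ with $0<a_n:=\psi(\zeta_n)\le\epsilon_n$ and
\[
\psi(\zeta)\ \ge\ a_n-\sqrt{\epsilon_n}\,\|\zeta-\zeta_n\|\qquad\text{for all }\zeta\in\mathcal{H}.
\]
Let $\mathcal{H}_n$ denote $\mathcal{H}$ with inner product divided by $a_n^2$, and $\alpha_n$ the action $\alpha$ viewed on $\mathcal{H}_n$, so that $\psi_n:=\mathrm{disp}_{\alpha_n}^{S}=a_n^{-1}\psi$ and in particular $\psi_n(\zeta_n)=1$. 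Using $\|\alpha(g)\zeta_n-\zeta_n\|\le\ell_S(g)\,\psi(\zeta_n)=\ell_S(g)\,a_n$ (word length with respect to $S$, via the triangle inequality and isometry of $\alpha$), the $\alpha_n$-orbit of $\zeta_n$ lies in the ball of radius $\ell_S(g)$ about $\zeta_n$ in $\mathcal{H}_n$. Fix a non-principal ultrafilter $\omega$ on $\mathbb{N}$ and form the pointed metric ultraproduct $\mathcal{H}_\omega:=\prod_\omega(\mathcal{H}_n,\zeta_n)$; since the parallelogram identity and bilinearity of the inner product pass to the ultralimit, $\mathcal{H}_\omega$ is again a Hilbert space, and the orbit bound just noted shows that $\alpha_\omega(g)[\zeta_n']:=[\alpha(g)\zeta_n']$ is a well-defined affine isometric $G$-action on $\mathcal{H}_\omega$.

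It remains to verify $1$-uniformity. Let $[\zeta]\in\mathcal{H}_\omega$ be represented by $(\zeta_n')$ with $\|\zeta_n'-\zeta_n\|/a_n\le C$ for $\omega$-almost all $n$. Dividing the displayed Ekeland inequality by $a_n$ gives
\[
\mathrm{disp}_{\alpha_\omega}^{S}([\zeta])=\lim_\omega\frac{\psi(\zeta_n')}{a_n}\ \ge\ \lim_\omega\Bigl(1-\sqrt{\epsilon_n}\,\frac{\|\zeta_n'-\zeta_n\|}{a_n}\Bigr)\ \ge\ \lim_\omega\bigl(1-C\sqrt{\epsilon_n}\bigr)=1 .
\]
Hence $\inf_{\zeta\in\mathcal{H}_\omega}\mathrm{disp}_{\alpha_\omega}^{S}(\zeta)\ge 1$, i.e.\ $(\alpha_\omega,\mathcal{H}_\omega)\in\mathcal{C}^{1\textrm{-}\mathrm{uniform}}$, so this class is non-empty. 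The main obstacle is precisely securing this last lower bound: naively taking $\zeta_n$ with $\psi(\zeta_n)\to 0$ and rescaling forces the displacement to equal $1$ only at the base point, while at nearby points $\psi$ could plummet and the infimum over $\mathcal{H}_\omega$ could collapse to $0$; selecting an Ekeland almost-minimizer with controlled slope on the scale $a_n$ is exactly what prevents this. The remaining verifications — that $\mathcal{H}_\omega$ is a Hilbert space and that $\alpha_\omega$ is a genuine affine isometric action — are routine metric-ultraproduct bookkeeping.
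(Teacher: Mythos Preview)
Your proof is correct and follows the same overall route as the paper: rescale a fixed-point-free action at carefully chosen basepoints and pass to a pointed metric ultraproduct. The only difference is in how the basepoints are selected: the paper invokes an elementary almost-minimizer lemma (Stalder's Lemma~3.3), which produces $\zeta_n$ with $\mathrm{disp}^S_\alpha(\chi)\ge \tfrac12\,\mathrm{disp}^S_\alpha(\zeta_n)$ on the ball of radius $(n+1)\,\mathrm{disp}^S_\alpha(\zeta_n)$, whereas you obtain the analogous control via Ekeland's variational principle; these are interchangeable ways of guaranteeing that the rescaled displacement does not collapse in the ultralimit.
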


\begin{proposition}[Shalom, 4.III.3--4 in \cite{shalom2006}]\label{proposition=shalom}
Let $M\leqslant G$ and $L\leqslant G$ be subgroups with $\langle M,L\rangle =G$. Assume that $M\leqslant G$ and $L\leqslant G$ have relative property $(\mathrm{T})$, and that $\mathcal{C}^{1\textrm{-}\mathrm{uniform}}\ne \emptyset$. 

Then, $D:=\inf \{\|\xi -\eta\|: (\alpha,\mathcal{H}) \in \mathcal{C}^{1\textrm{-}\mathrm{uniform}},\ \xi\in \mathcal{H}^{\alpha(M)},\ \eta\in \mathcal{H}^{\alpha(L)}\}$ is realized.
\end{proposition}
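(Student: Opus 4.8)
I want to prove Proposition~\ref{proposition=shalom}: under relative property $(\mathrm{T})$ for $G\geqslant M$ and $G\geqslant L$ together with $\mathcal{C}^{1\textrm{-}\mathrm{uniform}}\neq\emptyset$, the infimum $D$ is attained. The natural strategy is a compactness-by-ultraproduct argument. First I would fix a non-principal ultrafilter $\omega$ on $\mathbb{N}$ and choose a sequence $(\alpha_k,\mathcal{H}_k)\in\mathcal{C}^{1\textrm{-}\mathrm{uniform}}$ together with points $\xi_k\in\mathcal{H}_k^{\alpha_k(M)}$, $\eta_k\in\mathcal{H}_k^{\alpha_k(L)}$ with $\|\xi_k-\eta_k\|\to D$. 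The first issue is that these points need not be uniformly bounded, so before forming the ultraproduct I would re-center: since $\alpha_k$ is an affine isometric action, I may translate the cocycle (equivalently replace $\xi_k,\eta_k$ by $\xi_k-\xi_k$, $\eta_k-\xi_k$, absorbing the shift into the cocycle of $\alpha_k$) so that $\xi_k=0$ for all $k$; this keeps $1$-uniformity and keeps $\|\eta_k\|=\|\xi_k-\eta_k\|$ bounded (converging to $D$). Now $(\eta_k)_k$ is a bounded sequence.

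**Forming the limit object.** Next I would build the pointed metric ultraproduct $(\mathcal{H}_\omega,0_\omega):=\prod_\omega(\mathcal{H}_k,0)$ with the limiting affine isometric $G$-action $\alpha_\omega$ (well-defined because $G$ is finitely generated, so for each $g\in G$ the displacement $\|\alpha_k(g)\cdot 0\|=\|b_k(g)\|$ is bounded over $k$ by a word-length estimate in $S$; this is exactly where finite generation enters). The key structural facts to record are: (i) $\mathcal{H}_\omega$ is again a Hilbert space (metric ultraproducts of Hilbert spaces are Hilbert, via the parallelogram law passing to the limit), so strict convexity is available for Lemma~\ref{lemma=shalom}; (ii) $\alpha_\omega$ is still $1$-uniform, since $\inf_{\zeta}\mathrm{disp}^S_{\alpha_k}(\zeta)\geq 1$ is an $\forall\exists$ statement of the right form to survive the ultralimit — more precisely, $\mathrm{disp}^S_{\alpha_\omega}([\zeta_k])=\lim_\omega\mathrm{disp}^S_{\alpha_k}(\zeta_k)\geq 1$ for every element, so $\alpha_\omega\in\mathcal{C}^{1\textrm{-}\mathrm{uniform}}$. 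Then $\xi_\omega:=[0]=0_\omega$ and $\eta_\omega:=[\eta_k]$ are legitimate points of $\mathcal{H}_\omega$ with $\|\xi_\omega-\eta_\omega\|=\lim_\omega\|\eta_k\|=D$.

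**Using relative property $(\mathrm{T})$.** The remaining point is that $\xi_\omega\in\mathcal{H}_\omega^{\alpha_\omega(M)}$ and $\eta_\omega\in\mathcal{H}_\omega^{\alpha_\omega(L)}$. A priori $\xi_\omega$ is only \emph{approximately} $M$-fixed: $\|\alpha_k(m)\cdot 0 - 0\| = 0$ for $m\in M$ since $\xi_k=0$ was genuinely $M$-fixed — wait, that is genuinely fixed, so in fact $\xi_\omega$ is honestly $M$-fixed and likewise $\eta_\omega$ is honestly $L$-fixed, and no relative $(\mathrm{T})$ is needed for this sequence. This shows $D$ is realized in $\mathcal{H}_\omega$, which lies in $\mathcal{C}^{1\textrm{-}\mathrm{uniform}}$ over the group $G$ with generating set $S$, so the infimum defining $D$ is attained \emph{by this ultraproduct action}. (Here is where the convention that $\mathcal{H}$ ranges over \emph{all} Hilbert spaces, possibly non-separable, is used: $\mathcal{H}_\omega$ need not be separable.) The role of relative property $(\mathrm{T})$ is more subtle and enters in guaranteeing the conclusion of Proposition~\ref{proposition=si} can be run: one needs $D<\infty$, i.e. $\mathcal{H}^{\alpha(M)}$ and $\mathcal{H}^{\alpha(L)}$ nonempty, for \emph{every} $1$-uniform action — and relative $(\mathrm{T})$ for $G\geqslant M$, $G\geqslant L$ is exactly what gives $\mathcal{H}^{\alpha(M)}\neq\emptyset\neq\mathcal{H}^{\alpha(L)}$ for all affine isometric $\alpha$, hence $D<\infty$ and the minimizing sequence above exists with bounded $\|\eta_k\|$.

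**Main obstacle.** The step I expect to require the most care is verifying that the ultraproduct construction legitimately produces an \emph{affine isometric} $G$-action on a genuine Hilbert space together with the fixed points, rather than merely a limiting metric space: one must check that the cocycles $b_k$ have displacements bounded uniformly in $k$ on the finite generating set $S$ (so that the diagonal sequences $(\alpha_k(g)\cdot 0)_k$ define points of $\mathcal{H}_\omega$), that the limiting map $\alpha_\omega$ is well-defined and satisfies the cocycle identity, and that $1$-uniformity is a property preserved under this operation. None of these is deep, but each needs the hypothesis that $G$ is finitely generated and the fixed $S$, and the bookkeeping is the substance of the proof; I would state these as the content of Propositions~\ref{proposition=gs} and \ref{proposition=shalom} and give the re-centering plus ultralimit argument as a unified sketch.
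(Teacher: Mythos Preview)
Your approach is the same as the paper's: take a minimizing sequence $(\alpha_k,\mathcal{H}_k,\xi_k,\eta_k)$ and pass to a pointed metric ultraproduct. The re-centering you describe (setting $\xi_k=0$) is exactly the paper's choice of basepoint $\xi_n$ for the ultraproduct.

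There is, however, a genuine gap at the one step you yourself flag as the ``main obstacle.'' You assert that $\|\alpha_k(g)\cdot 0\|=\|b_k(g)\|$ is ``bounded over $k$ by a word-length estimate in $S$.'' A word-length estimate only gives $\|b_k(g)\|\leq |g|_S\cdot \mathrm{disp}^S_{\alpha_k}(0)$; it does not by itself bound $\mathrm{disp}^S_{\alpha_k}(0)=\mathrm{disp}^S_{\alpha_k}(\xi_k)$ uniformly in $k$. Nothing in the hypotheses ``$\alpha_k$ is $1$-uniform'' or ``$G$ is finitely generated'' provides an upper bound on this quantity. Your later paragraph correctly names this as the thing to check, but never supplies the argument.

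The missing ingredient is the hypothesis $\langle M,L\rangle=G$, which you quote but do not use. Since $S$ is finite and $\langle M,L\rangle=G$, there exists $N$ with $S\subseteq (M\cup L)^N$. Now $\mathrm{disp}^M_{\alpha_k}(\xi_k)=0$ because $\xi_k$ is $\alpha_k(M)$-fixed, and $\mathrm{disp}^L_{\alpha_k}(\xi_k)\leq 2\|\xi_k-\eta_k\|\leq 2(D+1)$ by the $2$-Lipschitz property of displacement and $\mathrm{disp}^L_{\alpha_k}(\eta_k)=0$. Subadditivity of displacement under products then yields
\[
\mathrm{disp}^S_{\alpha_k}(\xi_k)\ \leq\ \mathrm{disp}^{(M\cup L)^N}_{\alpha_k}(\xi_k)\ \leq\ N\cdot \mathrm{disp}^{M\cup L}_{\alpha_k}(\xi_k)\ \leq\ 2N(D+1),
\]
uniformly in $k$. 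This is precisely how the paper closes the gap, and it is the substantive content of the proof. Once you insert this bound, the rest of your outline (ultraproduct is Hilbert, the limit action is $1$-uniform, $\xi_\omega$ and $\eta_\omega$ are genuinely fixed by $M$ and $L$, and $\|\xi_\omega-\eta_\omega\|=D$) is correct and matches the paper.
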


Here we briefly recall the definitions on (pointed) metric ultraproducts. See a survey \cite{stalder} for more details. \textit{Ultrafilters} $\mathcal{U}$ on $\mathbb{N}$ have one-to-one correspondence to $\{0,1\}$-valued probability \textit{mean}s (that means, \textit{finitely additive} measures $\mu$, such that $\mu(\mathbb{N})=1$, defined over all subsets in $\mathbb{N}$). The correspondence is in the following manner: $\mathcal{U}=\{A\subseteq \mathbb{N}: \mu(A)=1\}$. A \textit{principal} ultrafilter corresponds to the Dirac mass at a point in $\mathbb{N}$. \textit{Non-principal} ultrafilters correspond to  all the other ones. 

In what follows, we fix a non-principal ultrafilter $\mathcal{U}$. For real numbers $r_n$, we write as $\lim_{\mathcal{U}}r_n=r_{\infty}$, if for all $\epsilon >0$, $\{n\in \mathbb{N}: |r_{\infty}-r_n|<\epsilon\}\in \mathcal{U}$. Then, it is well-known that  every \textit{bounded} real sequence $(r_n)_n$ has a (unique) limit with respect to $\mathcal{U}$. Since $\mathcal{U}$ is non-principal, if $\lim_{n\to \infty} r_n$ exists, then it coincides with $\lim_{\mathcal{U}}r_n$.

Let $((X_n,d_n,z_n))_{n\in \mathbb{N}}$ be a sequence of pointed metric spaces. Let $\ell_{\infty}$-$\prod_n (X_n,z_n):=\{(x_n)_n: x_n\in X_n,\ \sup_{n\in \mathbb{N}} d_n(x_n,z_n)<\infty\}$, and $d_{\infty}((x_n)_n,(y_n)_n):=\lim_{\mathcal{U}}d_n(x_n,y_n)$.  Finally, define the \textit{pointed metric ultraproduct} $(X_{\mathcal{U}},d_{\mathcal{U}},z_{\mathcal{U}})$ as follows.
\[
X_{\mathcal{U}}:=\ell_{\infty}\textrm{-}\prod_n (X_n,z_n)/\sim_{d_{\infty}=0}, 
\]
$d_{\mathcal{U}}$ is the induced (genuine) metric, and $z_{\mathcal{U}}:=[(z_n)_n]$. Here $\sim_{d_{\infty}=0}$ denotes that we identify all of two sequences in $\ell_{\infty}$-$\prod_n (X_n,z_n)$ whose distance in $d_{\infty}$ is zero, and $[\cdot]$ means the equivalence class in $\sim_{d_{\infty}=0}$. This is also written as $\lim_{\mathcal{U}}(X_n,d_n,z_n)$. We can show that metric ultraproducts of (affine) Hilbert spaces are again  (affine) Hilbert spaces (because Hilbert spaces are characterized in terms of inner products).

We  fix $(G,S)$. For a sequence of pointed (isometric) $G$-actions $(\alpha_n,(X_n,d_n),z_n)$ \textit{that satisfies}
\[
\sup_{n}\mathrm{disp}^S_{\alpha_n}(z_n) <\infty,\tag{$\diamond$}
\]
we can define the \textit{pointed metric ultraproduct action} $\alpha_{\mathcal{U}}$ on $(X_{\mathcal{U}},d_{\mathcal{U}},z_{\mathcal{U}})$ by
\[
\alpha_{\mathcal{U}}(g)\cdot [(x_n)_n]:=[(\alpha_n(g)\cdot x_n)_n]. 
\]
This is also written as $\lim_{\mathcal{U}}(\alpha_n,X_n,z_n)$.

\begin{proof}[Proof of Proposition~$\ref{proposition=gs}$]
Let $(\alpha,\mathcal{H})\in \mathcal{C}^{\mathrm{non}\textrm{-}\mathrm{fixed}}(\ne \emptyset)$. By completeness of $\mathcal{H}$, we can find a sequence $(\zeta_n)_{n\in \mathbb{N}}$ with the following property. For all $\chi \in \mathcal{H}$ with $\|\chi -\zeta_n\| \leq (n+1)\mathrm{disp}^S_{\alpha}(\zeta_n)$, it holds that $\mathrm{disp}^S_{\alpha}(\chi)\geq \mathrm{disp}^S_{\alpha}(\zeta_n)/2(>0)$; see \cite[Lemma~3.3]{stalder}. 

Then, the ultraproduct $\lim_{\mathcal{U}}(\alpha, (\mathcal{H}, r_n\|\cdot \|), \zeta_n)$ is well-defined and $1$-uniform. Here $r_n:=2 (\mathrm{disp}^S_{\alpha}(\zeta_n))^{-1}$.
\end{proof}

\begin{proof}[Proof of Proposition~$\ref{proposition=shalom}$]
Observe that this infimum is over a non-empty set. Let $((\alpha_n,\mathcal{H}_n,\xi_n,\eta_n))_n$ be a sequence that ``asymptotically realizes" $D$ as $n\to \infty$. More precisely, assume that $\|\xi_n -\eta_n\|\leq D+2^{-n}$. 
We claim that $((\alpha_n,\mathcal{H}_n,\xi_n))_{n\in \mathbb{N}}$ satisfies $(\diamond)$. Indeed, note that $\mathrm{disp}^{M}_{\alpha_n}(\xi_n)=0$ and $\mathrm{disp}^{L}_{\alpha_n}(\eta_n)=0$. The latter implies that $\mathrm{disp}^{L}_{\alpha_n}(\xi_n)\leq 2(D+1)$. Observe that there exists $N\in \mathbb{N}$ such that $(M\cup L)^N \supseteq S$, because $\langle M,L\rangle=G$ and $|S|<\infty$. Then, by the inequality on displacement functions in the proof of Proposition~\ref{proposition=boundedgeneration}, for all $n$ we have that
\[
\mathrm{disp}^{S}_{\alpha_n}(\xi_n)\leq \mathrm{disp}^{(M\cup L)^N}_{\alpha_n}(\xi_n)\leq 2N(D+1).
\]

Finally, the resulting action $(\alpha,\mathcal{H}):=\lim_{\mathcal{U}}(\alpha_n,\mathcal{H}_n,\xi_n)$, and points $\xi:=[(\xi_n)_n]$ and $\eta:=[(\eta_n)_n]$ realize $D$. Here, observe that $(\alpha,\mathcal{H}) \in \mathcal{C}^{1\textrm{-}\mathrm{uniform}}$.
\end{proof}

\subsection{Closing}
\begin{proof}[Proof of Theorem~$\ref{theorem=synthesis}$]
By contradiction. Suppose that $G\geqslant M$ and $G\geqslant L$ have relative property $(\mathrm{T})$, but that $G$ fails to have property $(\mathrm{T})$. Then, by Propositions~\ref{proposition=gs} and \ref{proposition=shalom}, there must exist a realizer  $(\alpha,\mathcal{H},\xi ,\eta)$ of $D$ as in Proposition~\ref{proposition=shalom}. In particular, $\|\xi-\eta\|=D=\mathrm{dist}(\mathcal{H}^{\alpha(M)},\mathcal{H}^{\alpha(L)})$, and $\alpha$ is $1$-uniform. 

Note that $G^{\mathrm{abel}}:=G/[G,G]$  is finite. Indeed, for the abelianization map $\mathrm{ab}_G\colon G\twoheadrightarrow (G^{\mathrm{abel}},+)$, relative properties $(\mathrm{T})$ above imply that $|\mathrm{ab}_G(M)|<\infty$ and $|\mathrm{ab}_G(L)|<\infty$: otherwise, we would have non-trivial translations. (Finite generation of $G$ implies ones of $\mathrm{ab}_G(M)$ and $\mathrm{ab}_G(L)$.) Finally, observe that $G^{\mathrm{abel}}=\mathrm{ab}_G(M)+\mathrm{ab}_G(L)$.

Let $\pi$ be the linear part of $\alpha$. According to the decomposition $\mathcal{H}=\mathcal{H}^{\pi(G)}\oplus (\mathcal{H}^{\pi(G)})^{\perp}$, $\alpha$ is decomposed into $\alpha_{\mathrm{trivial}}$ and $\alpha_{\mathrm{orthogonal}}$ (this is done by decomposing the cocycle $b$ into these two summands). Because $G^{\mathrm{abel}}$ is finite, $\alpha_{\mathrm{trivial}}$ is the trivial action. We can extract $\alpha_{\mathrm{orthogonal}}$ from $\alpha$ without changing $D$ and $1$-uniformity. We, thus, may assume that $\mathcal{H}^{\pi(G)}=\{0\}$. 

Then, Proposition~\ref{proposition=si} applies; therefore, either $\xi \in \mathcal{H}^{\alpha(G)}$ or $\eta \in \mathcal{H}^{\alpha(G)}$ must hold by hypothesis $(\mathrm{GAME}^{\mathrm{inn}})$. It contradicts the assumption that $\alpha$ is $1$-uniform.
\end{proof}

\begin{remark}
Our Theorem~\ref{theorem=synthesis} is greatly generalized to Main Theorems in \cite{mimurass}. There, we deal with fixed point properties with respect to more general metric spaces (even non-linear ones); we furthermore allow some non-inner automorphisms of $G$ in type $(\mathrm{I\hspace{-.1em}I})$ moves. (In type $(\mathrm{I\hspace{-.1em}I}^{\mathrm{inn}})$ move, the automorphisms are $\mathrm{inn}(w^{-1})$ for $w\in W$.)
\end{remark}

\section*{acknowledgments}
The author is truly indebted to Michihiko Fujii for the kind invitation to the conference ``Topology and Analysis of Discrete Groups and Hyperbolic Spaces'' in June, 2016 at the RIMS, Kyoto, where this expository article had an occasion to come out. He thanks Pierre de la Harpe and Corina Ciobotaru for drawing the author's attention to the Mautner phenomenon,  Thomas Haettel and Andrei Jaikin-Zapirain for comments, 
Masahiko Kanai for providing him with the terminology ``intrinsic'', and Takayuki Okuda for discussions on moves in $(\mathrm{Game})$.  The author is supported in part by JSPS KAKENHI Grant Number JP25800033 and by  the ERC  grant 257110 ``RaWG''.

\bibliographystyle{amsalpha}
\bibliography{mimura_t_fv.bib}

\end{document}